\newcommand{\R}{\mathbb{R}}
\newcommand{\Z}{\mathbb{Z}}
\newcommand{\pp}{\mathbb{P}}
\newcommand{\kO}{\mathcal{O}}
\newcommand{\kF}{\mathcal{F}}
\newtheorem {lem} {Lemma} 
\newtheorem {prop} {Proposition} 
\newtheorem {theo*} {Theorem}
\newtheorem* {rem} {Remark} 
\newcommand{\hurl}[2]{\url{#1}}
\newcommand{\hurl}[2]{\href{#2}{#1}}
\title
      {A balanced excited random walk}
\author{Ita{\i} Benjamini}
\author{Gady Kozma}
\address{The Weizmann Institute of Science, Rehovot POB 76100 Israel}
\email{itai.benjamini@weizmann.ac.il}
\email{gady.kozma@weizmann.ac.il}
\author{Bruno SCHAPIRA}
\address{D\'epartement de Math\'ematiques, B\^at. 425, Universit\'e Paris-Sud 11, F-91405 Orsay, cedex, France. }
\email{bruno.schapira@math.u-psud.fr}
\begin{document}

\begin{abstract}
The following random process on $\Z^4$ is studied. At first visit to a site, the two first coordinates perform a ($2$-dimensional) simple random walk step. At further visits, it is the last two coordinates which perform a simple random walk step. We prove that this process is almost surely transient. The lower dimensional versions are discussed and various generalizations and related questions are proposed.
\end{abstract}

\maketitle

\section{Introduction}
Excited random walk as defined by Benjamini and Wilson \cite{BW} has
a bias in some fixed direction, a feature which is highly useful in its
analysis. See e.g.\ \cite{MPRV} and references within. Attempts to relax
the dependence of the proof structure on monotonicity resulted in a
number of works where the walker has competing drifts. See \cite{ABK,
  KZ, H}. One motivation was to get closer to standard models of
reinforced random walks on $\Z^d$, which are symmetric in nature. We
think for instance on the question of recurrence vs.\ transience of
$1$-reinforced random walks, which is still widely open (see the
surveys \cite{MR} and \cite{Pem}). With the same goal in mind, we
started exploring excited-like models where the walker is in addition
also a martingale or a bounded perturbation of one, and posed some
questions in 2007 \cite{G}, which, it seems, are all still
open. Progress on this kind of models was achieved in \cite{KRS}, but
there the laws were not nearest-neighbors. Here we describe and solve
one such model of a nearest-neighbor walk in 4 dimensions.

We describe a general form of the model in any dimension $d\ge 2$, but
we will actually only deal with $4$ dimensions here.
So one has first to choose arbitrarily two integers $d_1\ge 1$ and $d_2\ge 1$ such that $d=d_1+d_2$.
Then we define the process $(S_n,n\ge 0)$ on $\Z^d$ as a mixture of two simple random walks in the following sense.
Set $S_n=(X_n,Y_n)$, where $X_n \in \Z^{d_1}$ is the set of the first $d_1$ coordinates of $S_n$ and $Y_n\in \Z^{d_2}$ is the set of the last $d_2$ coordinates. Now the rule is the following. First $S_0=0$. Next if $S$ visits a site for the first time then only the $X$ component performs a simple random walk step, that is:
$$\pp[S_{n+1}-S_n = (0,\dots,0,\pm 1,0,\dots,0)\mid \kF_n] = 1/(2d_1),$$
where the $\pm 1$ can be at any of the first $d_1$ coordinates. Otherwise, only $Y$ performs a simple random walk step:
$$\pp[S_{n+1}-S_n = (0,\dots,0,\pm 1,0,\dots,0)\mid \kF_n] = 1/(2d_2),$$
if $S$ already visited the site $S_n$ in the past, where this time the $\pm 1$ can be at any of the last $d_2$ coordinates.
We call this process $S$ the $M(d_1,d_2)$-random walk.

Here we say that a process is transient if almost surely any site is visited only finitely many times. It is said to be recurrent if almost surely it visits all sites infinitely often.
We will prove the following,
\begin{theo*}
\label{theo}
The $M(2,2)$-random walk is transient.
\end{theo*}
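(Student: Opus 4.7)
The plan is to prove $\sum_n\pp[S_n=x]<\infty$ for every $x\in\Z^4$, which via the first Borel--Cantelli lemma and a union bound over the countable lattice yields that every site is visited finitely often almost surely.

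The first step is to decompose $S$ using a coupling with two independent $2$-dimensional simple random walks. Let $(U_k)_{k\ge 1}$ and $(V_k)_{k\ge 1}$ be independent i.i.d.\ sequences of uniform $\pm e_1,\pm e_2$ steps on $\Z^2$, and let $\widetilde X_k=U_1+\dots+U_k$, $\widetilde Y_k=V_1+\dots+V_k$. Build $S$ so that at each step the next unused $U$-step updates the first two coordinates when the current site is fresh, and the next unused $V$-step updates the last two when it is a revisit. Denoting by $R_n$ the range at time $n$ and setting $T_n=n-R_n$, one obtains
\[
 X_n=\widetilde X_{R_n},\qquad Y_n=\widetilde Y_{T_n},
\]
with $\widetilde X$ and $\widetilde Y$ independent (though $R_n,T_n$ depend on both). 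A standard computation also shows $|S_n|^2-n$ is a martingale, so $E|S_n|^2=n$, consistent with diffusive behavior.

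The crux is the estimate $\pp[S_n=0]=O(1/n^2)$. Writing
\[
 \pp[S_n=0]=\sum_{r=0}^{n}\pp[R_n=r,\,\widetilde X_r=0,\,\widetilde Y_{n-r}=0],
\]
and using the $2$D local CLT $\pp[\widetilde X_r=0]\asymp 1/r$ and $\pp[\widetilde Y_{n-r}=0]\asymp 1/(n-r)$, together with the independence of $\widetilde X$ and $\widetilde Y$, the contribution from the bulk range $r\in[\alpha n,(1-\alpha)n]$ is of order $1/n^2$ after summing against $\pp[R_n=r]$, which is harmless. Summing over $n$ gives $\sum_n\pp[S_n=0]<\infty$, and a very similar argument (starting from the first visit to $x$) handles general $x\in\Z^4$.

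The main obstacle is the extreme regime, $R_n\le\alpha n$ or $R_n\ge(1-\alpha)n$, where the local-CLT factor $1/(r(n-r))$ is too large. This forces one to establish concentration of the range around a value of order $n$: a bound such as $\pp[R_n\notin[\alpha n,(1-\alpha)n]]=O(1/n^{1+\varepsilon})$ would suffice. Proving such a law-of-large-numbers-type statement for $R_n$ is the technical core. Heuristically it should hold: recurrence of the $2$D walk $\widetilde X$ forces $S$ to revisit previous ``$y$-slices'' at a positive rate (keeping $T_n\gtrsim n$), while recurrence of $\widetilde Y$ forces escape from any bounded set of already-visited $y$-values at a fixed $x$-slice after $O(1)$ $y$-steps per visit (keeping $R_n\gtrsim n$). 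Turning these heuristics into quantitative bounds, perhaps through a second-moment analysis of self-intersections of $S$, is where the real work will lie; once such bounds are in place, the decomposition above closes the argument.
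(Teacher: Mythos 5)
Your starting point --- writing $X_n=\widetilde X_{R_n}$, $Y_n=\widetilde Y_{n-R_n}$ with $\widetilde X,\widetilde Y$ independent planar walks and the range $R_n$ as a common random time change --- is exactly the paper's, but the two estimates on which your argument rests are not established, and at least one of them is far out of reach. First, you need $\pp[R_n\notin[\alpha n,(1-\alpha)n]]=O(n^{-1-\varepsilon})$. No linear lower bound on the range is known for this process; the paper proves only $R_n\ge n/(C\ln n)^2$ with high probability (via the $O((\ln n)^2)$ bound on the maximal local time of a planar walk, applied to whichever of the two components has taken at least $n/2$ steps), and --- this is the point --- it arranges the rest of the argument so that this much weaker bound suffices. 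Your ``technical core'' is therefore not a routine concentration step to be filled in later; it is an open problem, and your heuristic for it is not close to a proof. Second, even in the bulk regime your factorization
\[
\pp\bigl[R_n=r,\ \widetilde X_r=0,\ \widetilde Y_{n-r}=0\bigr]\approx \pp[R_n=r]\cdot\frac{1}{r}\cdot\frac{1}{n-r}
\]
is unjustified: $R_n$ is a functional of both underlying walks and is strongly correlated with their positions (conditioning on $\widetilde X_r=0$ changes the law of the range), so independence of $\widetilde X$ and $\widetilde Y$ alone does not let you decouple. This correlation is precisely what the paper flags as the nontrivial issue.

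The paper's way around both obstacles is to abandon the pointwise bound $\pp[S_n=0]=O(n^{-2})$ and instead bound $\pp[0\in\{S_n,\dots,S_{2n}\}]$. On the high-probability event $\{n/(C\ln n)^2\le r_n\le 99n/100\}$, the monotonicity of $k\mapsto r_k$ forces this return event into
\[
\bigl\{0\in U([n/(C\ln n)^2,\,2n])\bigr\}\cap\bigl\{0\in V([n/100,\,2n])\bigr\},
\]
whose two factors are measurable with respect to $U$ and $V$ separately, hence genuinely independent; each has probability $O(\ln\ln n/\ln n)$ by a standard hitting estimate for the planar walk started from distance roughly $\sqrt n$. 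The product $(\ln\ln n/\ln n)^2$ is much weaker than $n^{-2}$ per step, but it is summable over dyadic blocks $n=2^k$, and Borel--Cantelli finishes the proof. If you want to salvage your write-up, replace the exact-time local-CLT computation by this window-hitting argument; as it stands, the proposal has two genuine gaps and does not constitute a proof.
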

The proof of Theorem \ref{theo} is elementary, and uses only basic estimates on the standard $2$-dimensional simple random walk.
It relies on finding good upper bounds for the probability of return to the origin and then use Borel-Cantelli Lemma
(what makes however the proof nontrivial is that the two components $X$ and $Y$ are not independent).

\vspace{0.2cm}
Note that the canonical projections of $S$ on $\Z^{d_1}$ and $\Z^{d_2}$ are usual (time changed) simple random walks. So if $d=4$, and if $d_1$ or $d_2$ equals $3$, then $S$ is automatically transient, since the simple random walk on $\Z^3$ is transient. Likewise if $d$ is larger than $5$, then for any choice of $d_1$ and $d_2$ (larger than $1$), the resulting process will be transient. Thus the question of recurrence vs transience is only interesting in dimension less than $4$. In dimension $3$ there are two versions: $d_1=1$ and $d_2=2$ or $d_1=2$ and $d_2=1$.
We conjecture that in both cases $S$ will be transient, also because it is a $3$-dimensional process. Proving this seems nontrivial, but notice that a possible intermediate step between the dimension $3$ and $4$ could be to consider the analogue problem on the discrete $3$-dimensional Heisenberg group, which is generated by $2$ elements (and their inverses), yet balls of radius $r$ has size order $r^4$.  

\vspace{0.2cm}
Let us make some comments now on the $2$-dimensional case. As a
$2$-dimensional process, we believe that $M(1,1)$ is
recurrent. Observe however that this is not true when starting from
any configuration of visited sites. Indeed if we start with a vertical
line of visited sites, then the process will be trapped in this line,
and if the line does not include the origin, the process will not
return there. It is also not difficult to construct starting
environments such that the first coordinate of the process will tend
almost surely toward $+\infty$. For example if the initial
configuration is the ``trumpet'' $\{(x,y):|y|<e^x\}$ then the walker will
drift to infinity in the $x$ direction\footnote{We will not prove any
  of these claims, as they are somewhat off-topic}. Of course it is not possible for the random walk to create these environments in finite time, so it is not an obstacle for recurrence, but it may be interesting to keep this in mind. Another problem concerns the limiting shape of the range (i.e.~the set of visited sites) of the process. Based on heuristics and some simulations, we believe that it is a vertical interval. This problem is closely related to the question of evaluating the size of the range $R_n$ at time $n$. Indeed the horizontal displacement of the process at time $n$ is of order $\sqrt{\# R_n}$, whereas its vertical displacement is always of order $\sqrt{n}$. So another formulation of the problem would be to show that $R_n$ is sublinear. By the way we mention a related question. Assume that at each step, one can decide, conditionally on the past, to move the first coordinate or the second coordinate (and then perform a $1$-dimensional simple random walk step). Then what is the best strategy to maximize the range? In particular is it possible for the range to  be of size roughly $n$, or at least significantly larger than $n/\ln n$, which is the size of the range of the simple random walk?

\vspace{0.2cm}
A possible generalization of our model would be to consider multi-excited versions, in the spirit of Zerner \cite{Z}.
In this case one should first decompose $d$ as $d=d_1+\dots+d_m$, for some $m\ge 2$ and $d_i\ge 1$, with $i\le m$.
Then at $i^\textrm{th}$ visit to a site only the $i^\textrm{th}$ component of $S$ performs a simple random walk step, if $i<m$, and at further visits
only the $m^\textrm{th}$ component moves. In dimension $4$ for instance the case $d_1=2$ and $d_2=d_3=1$ seems interesting and nontrivial. Another interesting case is
$d\ge 3$ and $d_i=1$ for each $i\le d$ (even the case $d$ very large seems nontrivial).

\vspace{0.2cm}
Another related problem is the following. Take two symmetric laws $\mu_1$ and $\mu_2$ on $\Z^4$. Decide that at first visit to a site the jump of the process has law $\mu_1$, and at further visits it has law $\mu_2$. Then is it true that if the support of $\mu_1$ and $\mu_2$ both generate $\Z^4$, then the process is transient?

\vspace{0.2cm}
\noindent \textit{Acknowledgments:} This work was done while BS was a visitor at the Weizmann Institute and he thanks this institution
for its kind hospitality.

\section{Proof of the theorem}

\noindent The theorem is a direct consequence of the following proposition.

\begin{prop}
\label{prop}
There exists a constant $C>0$ such that for any $n>1$,
$$\pp\left[0\in \{S_n,\dots,S_{2n}\}\right] \le C\left(\frac{\ln \ln n}{\ln n}\right)^2.$$
\end{prop}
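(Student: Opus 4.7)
The approach is to couple $(X,Y)$ with two independent two-dimensional simple random walks $\tilde X$ and $\tilde Y$ by setting $X_k=\tilde X_{A_k}$ and $Y_k=\tilde Y_{k-A_k}$, where $A_k=\#\{j<k:S_j\text{ is a first visit}\}$ equals the range $R_{k-1}$. Under this coupling $\tilde X$ and $\tilde Y$ are genuinely independent, and $\{S_k=0\}$ becomes $\{\tilde X_{A_k}=0,\tilde Y_{k-A_k}=0\}$, so all dependence between the two components lives inside the single random variable $A_k$. A union bound then reduces the proposition to showing $\pp[S_k=0]\lesssim(\ln\ln k)^2/(k\ln^2 k)$ uniformly for $k\in[n,2n]$.

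To control $A_k$, I would observe that the $Y$-projection of $S$ coincides with $\tilde Y$ run up to time $k-A_k$, whose range is $\asymp(k-A_k)/\ln(k-A_k)$ with high probability and is bounded by $R_k=A_k$; this yields $A_k\gtrsim k/\ln k$. A complementary control on $A_k$ from above, through the product bound $R_k\le r_{\tilde X}(A_k)\cdot r_{\tilde Y}(k-A_k)$, should pin $A_k$ to an interval $[a_1,a_2]$ with both $a_1$ and $k-a_2$ of order $k/\ln k$ (or possibly a sharper scale) with high probability.

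The final, and hardest, step is to extract the bound $(\ln\ln k)^2/(k\ln^2 k)$ on $\pp[S_k=0]$ given this control on $A_k$. The naive Markov estimate, using independence,
\[
\pp[S_k=0,\,A_k\in[a_1,a_2]]\le\sum_{t\in[a_1,a_2]}\pp[\tilde X_t=0]\,\pp[\tilde Y_{k-t}=0]\asymp\frac{\ln\ln k}{k},
\]
is much too weak: summed over $k$ it only yields $\ln\ln n$, whereas we need it smaller by an additional factor of $\ln\ln n/\ln^2 n$. The missing gain must come from exploiting the equation $A_k=t$ itself, not merely the looser fact that some zero of $\tilde X$ in $[a_1,a_2]$ is compatible with a zero of $\tilde Y$. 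I expect the argument goes through a finer scale-by-scale decomposition of $[a_1,a_2]$, combined with a quantitative concentration estimate for $A_k$ inside a window much narrower than $[a_1,a_2]$, so that within each scale one pays $1/(t(k-t))\asymp 1/k^2$ rather than a logarithmic sum. Executing this refined estimate, and proving the sharper concentration of $A_k$ that it requires, is in my view the main obstacle and the real content of the proof.
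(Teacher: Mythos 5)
Your first paragraph reproduces the paper's setup exactly (the time change $(X_k,Y_k)=(U(r_k),V(k-r_k))$ with $U,V$ independent, plus two-sided control of the range), but the reduction to a union bound over $k\in[n,2n]$ and a pointwise estimate $\pp[S_k=0]\lesssim(\ln\ln k)^2/(k(\ln k)^2)$ is where the argument breaks, and you say so yourself: the only bound that survives decoupling $A_k$ from the two walks, namely $\sum_t\pp[\tilde X_t=0]\,\pp[\tilde Y_{k-t}=0]$ over the admissible window for $t$, falls short by a factor of order $(\ln n)^2$, and recovering that factor would require using the event $\{A_k=t\}$ \emph{jointly} with the two return events --- which is precisely the dependence between the range and the walks that makes the model hard in the first place. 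The step you defer as ``the main obstacle and the real content of the proof'' is not the content of the paper's proof; it is a difficulty manufactured by the union bound itself, which discards the strong positive correlation between the events $\{S_k=0\}$ at nearby times (compare the single planar walk, where $\sum_{k=n}^{2n}\pp[U_k=0]\asymp 1$ while $\pp[0\in\{U_n,\dots,U_{2n}\}]\asymp 1/\ln n$).

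The paper never estimates $\pp[S_k=0]$ for a fixed $k$. On the high-probability event that $n/(C\ln n)^2\le r_k\le 99k/100$ for the relevant $k$, the event $\{0\in\{S_n,\dots,S_{2n}\}\}$ forces $U$ to hit $0$ at some time in $[n/(C\ln n)^2,2n]$ \emph{and} $V$ to hit $0$ at some time in $[n/100,2n]$. These two events are measurable with respect to $U$ alone and $V$ alone, hence genuinely independent, and each is a window-hitting probability for a single planar walk: by the standard escape estimate, a planar walk observed at a time $t\ge n/(\ln n)^3$ is (up to an $\kO(1/\ln n)$ error) at distance at least $\sqrt n/(\ln n)^3$ from the origin, and from there hits $0$ before time $2n$ with probability $\kO(\ln\ln n/\ln n)$. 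The product of the two window-hitting probabilities gives the square, with no local limit theorem, no concentration of $A_k$ beyond the crude range lemma, and no sum over $k$. Replacing your union bound by this factorization of a single hitting event over the whole window is what closes the argument; as written, your outline is missing its decisive step.
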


\noindent Indeed assuming this proposition we get
$$\sum_{k\ge 0} \pp\left[0\in \{S_{2^k},\dots,S_{2^{k+1}}\}\right]<+\infty,$$
and we can conclude by using the Borel-Cantelli lemma. So all we have to do is to prove this proposition.

\vspace{0.2cm}
\noindent \textit{Proof of Proposition \ref{prop}.} For any $n\ge 1$, denote by $r_n$ the cardinality of the range of $S$ at time $n$. The next lemma will be needed:

\begin{lem}
\label{lem}
For any $M>0$, there exists a constant $C>0$, such that
$$\pp\left[n/(C\ln n)^2\le r_n\le 99n/100\right] = 1-o(n^{-M}).$$
\end{lem}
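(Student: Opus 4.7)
The plan is to establish the two inequalities on $r_n$ separately. The starting observation is that $r_n$ equals (up to $\pm 1$) the number of $X$-steps in $[0,n]$, since the $X$-component moves precisely on fresh visits; consequently $n-r_n$ is the number of $Y$-steps, and the projections $X$ and $Y$ are time-changes of two independent planar simple random walks $\hat X$ and $\hat Y$, namely $X_t=\hat X_{N_X(t)}$ and $Y_t=\hat Y_{N_Y(t)}$ with $N_X(t)+N_Y(t)=t$ and $N_X(n)=r_n\pm 1$.

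For the lower bound $r_n\ge n/(C\ln n)^2$, I would use that $Y$ is a coordinate projection of $S$, so $r_n=|R_S|\ge|R_Y|=|R_{\hat Y}(n-r_n)|$. For any 2D simple random walk one has the trivial inequality $|R(k)|\ge k/\max_y u(y)$, where $u(y)$ denotes local time at $y$. A tail bound of the form $\pp(\max_y u_{\hat Y}(y)\ge C(\ln k)^2)=o(k^{-M})$---which follows from geometric-type tails for the number of returns of 2D SRW to a fixed site (mean $\Theta(\ln k)$), combined with a union bound over a diffusive ball of radius $\sqrt{k\ln k}$---then yields $|R_{\hat Y}(k)|\ge k/(C\ln k)^2$ with the required probability. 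On the event $\{r_n\le n/2\}$ the walk $\hat Y$ has length at least $n/2$, hence $r_n\ge(n/2)/(C\ln n)^2$; on $\{r_n>n/2\}$ the bound is trivial. A union bound finishes this half.

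The upper bound $r_n\le 99n/100$ is equivalent to $N_Y(n)\ge n/100$ with probability $1-o(n^{-M})$, so the plan is to show that a positive fraction of the $\hat X$-revisits must trigger a $Y$-step. For each $x\in R_X$, the number of stays of $X$ at $x$ equals the local time $u_{\hat X}(x)$, and $\sum_x u_{\hat X}(x)=r_n$. On $\{r_n\ge 99n/100\}$ the walk $\hat X$ is a 2D SRW of length $\ge 99n/100$ whose range satisfies $|R_X|\le Cn/\ln n$ with probability $1-o(n^{-M})$, so the average number of visits to a site $x\in R_X$ is at least of order $\ln n$. At each repeated $\hat X$-visit to $x$, the $Y$-value on arrival must lie outside the set $V_x$ of $y$-coordinates previously seen at $x$ to remain an $X$-step; otherwise a $Y$-step is triggered. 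Using 2D hitting-time/capacity estimates for the independent $\hat Y$-walk between consecutive $\hat X$-visits to $x$, one shows that a positive fraction of these visits land inside $V_x$, yielding $N_Y(n)\ge c\,r_n$ after a concentration (Azuma-type) argument, which contradicts $N_Y(n)\le n/100$ when $r_n\ge 99n/100$.

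The main obstacle is the upper bound: the lower bound reduces cleanly to a local-time tail estimate for 2D SRW, whereas the upper bound requires a careful joint analysis of $\hat X$ and $\hat Y$, together with quantitative 2D hitting estimates showing that the $V_x$ cannot be efficiently avoided by the $\hat Y$-walk, in order to improve the trivial inequality $r_n\le n$ by a constant factor with polynomial probability.
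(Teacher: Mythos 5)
Your lower bound is essentially the paper's own argument: one of the two coordinate walks has made at least $n/2$ steps by time $n$, the range of $S$ dominates the range of that coordinate, and the range of a planar SRW of length $k$ is at least $k/(C\ln k)^2$ with probability $1-o(k^{-M})$ because the maximal local time has geometric-type tails with mean $O(\ln k)$ (union-bounding over the deterministic ball of radius $k$ suffices; you do not need the diffusive-ball refinement). That half is correct.

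The upper bound is where you diverge from the paper, and your route has a genuine gap. The paper's argument is purely local: if $S_k$ and $S_{k+1}$ are both fresh, then two consecutive $X$-steps are taken, and with conditional probability $1/4$ the second reverses the first, so $S_{k+2}=S_k$ is not fresh; hence every window of three times contains a non-fresh time with conditional probability at least $1/4$, and Azuma--Hoeffding gives at least $n/100$ non-fresh times with exponentially small failure probability. Your global argument instead hinges on the claim that a positive fraction of the $\hat X$-revisits to a site $x$ find $\hat Y$ inside $V_x$, and this is not justified by the hitting/capacity estimates you invoke: if $m$ $Y$-steps have elapsed since the last visit to $x$ and $|V_x|=j$, the chance that $\hat Y$ sits in $V_x$ is of order $j/m$, which need not be bounded below. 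Moreover ``positive fraction'' is not enough --- to contradict $N_Y(n)\le n/100$ when $r_n\ge 99n/100$ you need the fraction to exceed $1/99$; and the natural fix (``either a $Y$-step occurred since the last visit to $x$, or the arrival triggers one'') does not sum over $x$, since the return intervals of distinct sites overlap and a single $Y$-step can serve many of them. You would also need a $1-o(n^{-M})$ upper-tail bound for the range of a planar SRW at scale $Cn/\ln n$, which is true but is itself heavier machinery than the lemma requires. I recommend replacing this half by the three-step cancellation argument.
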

\begin{proof} Note first that for any $k$, if $S_k$ and $S_{k+1}$ were not already visited in the past, then
$S_{k+2}= S_k$ with probability at least $1/4$.
In particular for any $k$, there is probability at least $1/4$ that $S$ is not at a fresh site at one of the time $k$, $k+1$ or $k+2$.
Then a standard use of the Azuma-Hoeffding inequality gives the desired upper bound on $r_n$.

We now prove the lower bound. Let $c>0$ be fixed. Let $(U_n,n\ge 0)$ be a simple random walk on $\Z^2$.
For any $n\ge 1$ and $x\in \Z^2$, denote by $N_n(x)$ the number of
visits of $U$ to $x$ before time $n$. A simple and standard
calculation (see e.g.\ \cite[Proposition 4.2.4]{LL}) shows that there
exists a constant $C>0$ such that the probability to not visit $x$ in
the next $n$ steps after a given visit is $\ge C/\log n$. Using the
strong Markov property one gets that the probability to make $k+1$
visits by time $n$ is $\le \exp(-Ck/\log n)$ and hence
there exists some $C'>0$ depending on $M$ such that
$$\pp[N_n(x)\ge C'(\ln n)^2] =o( n^{-M-2}).$$
Moreover since $U$ makes nearest neighbor jumps, before time $n$ it stays in a ball of radius $n$. Thus if $N_n^*=\sup_x N_n(x)$, then
$$\pp[N_n^*\ge C'(\ln n)^2] =n^2\times o( n^{-M-2})=o( n^{-M}).$$
Thus if $r_{n,U}$ is the size of the range of $U$ at time $n$, we get
$$\pp\left[r_{n,U}\le n/(C'(\ln n)^2)\right] =o( n^{-M}).$$
Let's come back to the original process $S=(X,Y)$ now. We just observe that at time $n$ one of the $X$ or $Y$ component performed $n/2$ steps. Since each of these components is a simple random walk, we deduce from the previous estimate, that before time $n$,
$X$ or $Y$ will visit at least $n/(2C'(\ln n)^2)$ sites, with
probability at least $1-o(n^{-M})$. This gives the desired lower bound for $r_n$ and concludes the proof of the lemma.
\end{proof}

\noindent We can finish now the proof of Proposition \ref{prop}. As noticed in the introduction,
observe that the $X$ and $Y$ components are time changed simple random walks. Specifically we have the equality in law:
$$((X_k,Y_k),k\ge 0)=((U(r_k),V(k-r_k)), k\ge 0),$$
where $U$ and $V$ are two independent simple random walks on $\Z^2$ (and where by abuse of notation we also denote by $r_k$, the size of the range of the $(U,V)$ process at $k$-th' step). By using Lemma \ref{lem} and the independence of $U$ and $V$, we
get
\begin{eqnarray*}
\pp\left[0\in \{S_n,\dots,S_{2n}\}\right]& \le &\pp\left[0\in \{U(n/(C\ln n)^2),
\dots,U(2n)\}\right]\\
   && \times \pp\left[0\in \{V(n/100),
\dots,V(2n)\}\right] + o(n^{-M}).
\end{eqnarray*}
Thus Proposition \ref{prop} follows from the following lemma:
\begin{lem} Let $U$ be the simple random walk on $\Z^2$ and let $t\in [n/(\ln n)^3,2n]$. Then
\begin{eqnarray}
\label{returnU}
\pp\left[0\in \{U(t),
\dots,U(2n)\}\right] = \kO\left(\frac{\ln \ln n}{\ln n}\right).
\end{eqnarray}
\end{lem}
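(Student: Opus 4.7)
The plan is to bound the probability by a second-moment-style / strong Markov argument, using the fact that once the walk hits $0$, on a sufficiently long time interval afterward it is expected to return many more times.

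Let $N := \#\{k \in [t,2n] : U(k)=0\}$ denote the number of visits to the origin inside the window. By the local CLT on $\Z^2$, $\pp[U(k)=0] \le C/k$, so
\[
\E[N] \;\le\; C \sum_{k=t}^{2n} \frac{1}{k} \;=\; \kO\bigl(\ln(2n/t)\bigr) \;=\; \kO(\ln \ln n),
\]
where the last equality uses the hypothesis $t \ge n/(\ln n)^3$. This is the only input I would take from the structure theory of simple random walk.

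Next I would introduce the first visit time $T := \min\{k \ge t : U(k) = 0\}$ (on the event $A$ that the walk hits $0$ in $[t,2n]$) and split
\[
\pp[A] \;=\; \pp\!\left[T \le 2n - \tfrac{n}{\ln n}\right] + \pp\!\left[T \in \bigl(2n - \tfrac{n}{\ln n},\, 2n\bigr]\right].
\]
For the first term, condition on $T = s$ with $s \le 2n - n/\ln n$. By the strong Markov property the walk from time $T$ is a fresh SRW starting at $0$, and $N$ dominates the number of visits to $0$ made by this fresh walk during the remaining time, which is at least $n/\ln n$. Hence
\[
\E[N \mid T=s] \;\ge\; \sum_{j=0}^{\lfloor n/\ln n\rfloor} p_j(0,0) \;\ge\; c \ln n,
\]
for $n$ large, again from the local CLT. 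Integrating over $s$ gives
\[
\E[N] \;\ge\; c(\ln n)\,\pp\!\left[T \le 2n - \tfrac{n}{\ln n}\right],
\]
so the first term is $\kO(\ln\ln n/\ln n)$.

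For the second term, a direct union bound suffices:
\[
\pp\!\left[T \in \bigl(2n-\tfrac{n}{\ln n},\,2n\bigr]\right]
\;\le\; \sum_{k=2n-n/\ln n}^{2n} p_k(0,0) \;\le\; C \sum_{k=2n-n/\ln n}^{2n} \frac{1}{k} \;=\; \kO\!\left(\frac{1}{\ln n}\right).
\]
Adding the two contributions gives \eqref{returnU}. There is no real obstacle; the only point that requires care is choosing the splitting threshold $n/\ln n$, which is tuned so that (i) the remaining window after $T$ is long enough that a fresh SRW still has $\Theta(\ln n)$ expected returns to $0$, and (ii) the thin terminal slab $(2n - n/\ln n, 2n]$ contributes only $\kO(1/\ln n)$ when one sums $p_k(0,0) \sim 1/k$ across it.
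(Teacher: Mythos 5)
Your argument is correct, and it is a genuinely different route from the paper's. The paper first shows that $|U(t)|\ge \sqrt n/(\ln n)^3$ with probability $1-\kO(1/\ln n)$, and then invokes two potential-theoretic inputs from Lawler's books: the estimate $\pp_x[\tau_0<\tau_{|x|(\ln|x|)^4}]=\kO(\ln\ln|x|/\ln|x|)$ (a two-dimensional gambler's-ruin bound, where the $\ln\ln$ comes from $\ln\bigl(|x|(\ln|x|)^4/|x|\bigr)$) together with the fact that the exit time of that large ball exceeds $2n$ with probability $1-\kO(n^{-1})$; the strong Markov property at time $t$ then finishes the proof. You instead run a first-entrance (Kac-moment) decomposition: bound $\E[N]$ above by $\kO(\ln(2n/t))=\kO(\ln\ln n)$ using only the on-diagonal upper bound $p_k(0,0)\le C/k$, bound $\E[N\mid T=s]$ below by $c\ln n$ when at least $n/\ln n$ steps remain, and dispose of the terminal slab by a union bound; here the $\ln\ln n$ comes from the hypothesis $t\ge n/(\ln n)^3$ via $\ln(2n/t)$. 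Your version is arguably more self-contained, needing only the local CLT on the diagonal rather than hitting-probability and exit-time estimates for balls, and it avoids having to control where $U(t)$ is; the paper's version is shorter on the page because it delegates the work to quoted results. Both give the stated bound uniformly in $t$, and your choice of cutoff $n/\ln n$ is correctly tuned so that the remaining window still yields $\Theta(\ln n)$ expected returns while the slab contributes only $\kO(1/\ln n)$.
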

\begin{proof} This lemma is standard, but we give a proof for reader's convenience. First let $|\cdot|$ denotes some norm on $\R^2$.
Since $t\ge n/(\ln n)^3$, it is well known (see e.g. \cite[Theorem 2.1.1]{LL}) that
$$\pp\left[|U(t)|\le \frac{\sqrt n}{(\ln n)^3}\right] = \kO((\ln n)^{-1}).$$
Moreover for any $|x|\ge 4$ (see e.g. \cite[Proposition 1.6.7]{L}),
$$\pp_x[\tau_0<\tau_{|x|(\ln |x|)^4}] = \kO\left(\frac{\ln \ln |x|}{\ln |x|}\right),$$
where $\pp_x$ denotes the law of $U$ starting from $x$ and for any $r\ge 0$,
$$\tau_r= \inf\{k>0 \ :\ r<|U(k)|\le r+1\}.$$
But if $|x|\ge \sqrt{n}/(\ln n)^3$, then $n\ln n = \kO\left(|x|(\ln |x|)^4\right)$ and in particular (see e.g. \cite[Proposition 2.1.2]{LL})
$$\pp\left[\tau_{|x|(\ln |x|)^4} \le 2n \right] = \kO(n^{-1}).$$
Notice finally that if $|x|\ge \sqrt{n}/(\ln n)^3$, then
$$\frac{\ln \ln |x|}{\ln |x|} = \kO\left(\frac{\ln \ln n}{\ln n}\right).$$
The lemma follows by using the strong Markov property.
\end{proof}
\noindent The proof of Theorem \ref{theo} is now finished. \hfill $\square$

\begin{rem} \emph{The proof shows actually that for any finite initial configuration of visited sites, $M(2,2)$ is transient.
This is of course not always the case if this configuration is infinite. For instance if we decide that all sites of the form $(0,0,*,*)$ are already visited at time $0$, then the $X$ component will never move and $M(2,2)$ will not be transient.}
\end{rem}

\end{document}